\setlist[enumerate,1]{label={\rm(\thethm.\alph*)}, ref={\rm\thethm.\alph*}}
\setlist[enumerate]{itemsep=.2em,topsep=.2em,leftmargin=1.25em,itemindent=2.0em}
\newtheorem{thm}{Theorem}
\newtheorem{cor}[thm]{Corollary}
\newtheorem{claim}{Claim}
\newtheorem{lem}[thm]{Lemma}
\theoremstyle{definition}
\newtheorem{say}[thm]{ }
\newtheorem{defn-thm}[thm]{Definition--Theorem}  
\newtheorem{defn-lem}[thm]{Definition--Lemma}  
\theoremstyle{remark}
\newtheorem{exmp}[thm]{Example}
\newtheorem{rem}[thm]{Remark}
\newtheorem{ques}[thm]{Question}    
\newtheorem{subremark}[claim]{Remark}
\numberwithin{equation}{thm}
\renewcommand{\o}[0]{{\mathcal O}} 
\newcommand{\z}[0]{{\mathbb Z}}
\renewcommand{\r}[0]{{\mathbb R}} 
\renewcommand{\a}[0]{{\mathbb A}}
\newcommand{\p}[0]{{\mathbb P}}
\newcommand{\q}[0]{{\mathbb Q}}
\newcommand{\qtq}[1]{\quad\mbox{#1}\quad}
\newcommand{\pic}[0]{\operatorname{Pic}}
\newcommand{\rank}[0]{\operatorname{rank}}
\newcommand{\mult}[0]{\operatorname{mult}}
\newcommand{\supp}[0]{\operatorname{Supp}}
\newcommand{\im}[0]{\operatorname{im}}    
\newcommand{\proj}[0]{\operatorname{Proj}}
\newcommand{\trace}[0]{\operatorname{Trace}}  
\newcommand{\cent}[0]{\operatorname{center}}
\newcommand{\sing}[0]{\operatorname{Sing}}
\newcommand{\chr}[0]{\operatorname{char}}
\newcommand{\rdown}[1]{\lfloor{#1}\rfloor}
\newcommand{\simq}[0]{\sim_{\q}}
\newcommand{\depth}[0]{\operatorname{depth}} 
\newcommand{\tsum}[0]{\textstyle{\sum}}
\newcommand{\To}{\longrightarrow}
\def\into{\DOTSB\lhook\joinrel\to}
\def\loccoh#1.#2.#3.#4.{H^{#1}_{#2}(#3,#4)}
\DeclareMathAlphabet{\mathchanc}{OT1}{pzc}%
                                {m}{it}
\def\thmhead@plain#1#2#3{%
  \thmname{#1}\thmnumber{\@ifnotempty{#1}{ }\@upn{#2}}%
  \thmnote{. \the\thm@notefont#3}}
\let\thmhead\thmhead@plain
\title{Families of stable  3-folds in positive characteristic}
\author{J\'anos Koll\'ar}
\address{Princeton University, Princeton NJ 08544-1000, USA}
\email{kollar@math.princeton.edu}
\begin{document}


\maketitle

\begin{prelims}

\DisplayAbstractInEnglish

\bigskip

\DisplayKeyWords

\medskip

\DisplayMSCclass

\end{prelims}


\newpage




Following \cite{ksb}, the moduli space of varieties of general type is compactified as follows. Start with a family of smooth projective varieties $g^\circ\colon X^\circ_B\to B^\circ$ over a smooth, affine curve $B^\circ$. Possibly after a base change $C^\circ\to B^\circ$, it extends to a semi-stable family $g\colon X\to C$ over a smooth proper curve $C\supset C^\circ$, as in \cite{MR0335518}. Finally, let $g^{\rm c}\colon X^{\rm c}\to C$ be the relative canonical model as in \cite[Section~3.8]{km-book}.

In characteristic 0, the resulting fibers have semi-log-canonical singularities and ample canonical class. These are called {\it stable varieties.}  Stable varieties with a fixed volume have a coarse moduli space that is projective over $\q$; see \cite{k-modbook} for details.

Our aim is to give  examples to show that this process does not work for 3-folds in positive characteristic. For simplicity, we work over algebraically closed fields from now on.

\begin{thm}\label{no.ss.deg.charop.thm}
  Flat families of stable 3-fold pairs of fixed volume do not form a proper moduli theory in any characteristic $p>0$.
\end{thm}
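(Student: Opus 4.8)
The plan is to violate the valuative criterion of properness directly: I will produce a one-parameter family of stable $3$-folds over the punctured spectrum of a DVR in characteristic $p$ that admits no stable limit, even after a finite base change, because the central fiber forced upon us by the canonical model procedure of \cite{ksb} fails to be semi-log-canonical. Concretely, let $R$ be a DVR with algebraically closed residue field of characteristic $p>0$ and fraction field $K$, and let $g^{\rm c}\colon X^{\rm c}\to\spec R$ be the relative canonical model of a semistable degeneration as in \cite[Section~3.8]{km-book}. The generic fiber $X^{\rm c}_K$ will be a canonically polarized $3$-fold of the prescribed fixed volume, so the \emph{only} candidate stable limit is the central fiber $X^{\rm c}_0$, and it suffices to show that this fiber is not slc and that no finite extension $R\subset R'$ repairs it.

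The heart of the argument is an explicit construction of a log canonical $4$-fold pair $(X^{\rm c},X^{\rm c}_0)$ whose log canonical center $X^{\rm c}_0$ is \emph{not weakly normal}; this is precisely the pathology advertised in the abstract. In characteristic $0$ it cannot occur: adjunction for lc pairs forces the divisorial central fiber of an lc total space to be demi-normal, hence weakly normal and $S_2$, which is exactly what guarantees that stable limits are slc. The mechanism that breaks this when $p>0$ is purely inseparable. I would engineer the degeneration so that, along a prime divisor $D\subset X^{\rm c}_0$ of codimension one in the fiber, the normalization of $X^{\rm c}_0$ maps to $X^{\rm c}_0$ by a finite universal homeomorphism that is \emph{not} an isomorphism --- a Frobenius-type identification of sheets available only in characteristic $p$. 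Where an slc central fiber is required to carry only nodes in codimension one, this fiber instead carries an inseparable cuspidal identification, destroying seminormality.

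Carrying this out requires three verifications. First, that $(X^{\rm c},X^{\rm c}_0)$ is genuinely log canonical: on a log resolution one must check that the inseparable gluing contributes only coefficient-one structure to the central fiber rather than worse discrepancies. Second, that $X^{\rm c}_0$ really is the fiber of the relative canonical model, i.e.\ that $K_{X^{\rm c}}$ is relatively ample, that the total space is flat over $\spec R$, and that the fixed-volume bound is met. Third, and most delicately, that the failure of weak normality is \emph{insensitive to finite base change}: since the valuative criterion permits replacing $R$ by a finite, possibly ramified or inseparable, extension $R'$, the non-seminormal structure must persist after pulling back and re-taking the canonical model. This is where characteristic $p$ is indispensable, because purely inseparable identifications are not resolved by the finite covers one is allowed to perform.

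I expect the main obstacle to be the fine control of adjunction in characteristic $p$: one must arrange the conductor of the gluing locus so that it is compatible with log-canonicity while simultaneously obstructing seminormality. In characteristic $0$ these two demands are contradictory, so the entire difficulty is to exhibit a configuration in which the $p$-power torsion in the structure sheaf of the center lies below the threshold that adjunction detects, yet above the threshold that seminormality requires. Once such a pair is in hand, non-properness follows formally: the family $X^{\rm c}_K\to\spec K$ has $X^{\rm c}_0$ as its unique candidate limit, that limit is not slc, and no allowed base change produces an slc limit --- so flat families of stable $3$-folds of fixed volume cannot satisfy the valuative criterion, proving Theorem~\ref{no.ss.deg.charop.thm}.
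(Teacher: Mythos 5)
Your skeleton does match the paper's proof in outline---non-properness is shown by exhibiting a one-parameter family whose relative canonical model $(Y^{\rm c},\Delta^{\rm c})$ is the \emph{unique} stable limit after any base change, yet whose central fiber fails to be a stable pair, with the failure of weak normality of the lc center appearing exactly as you predict (Corollary~\ref{non.wn.4.cor})---but the proposal defers the entire mathematical content with ``I would engineer the degeneration so that\dots,'' and that engineering is the theorem. The paper's engine is a jump of \emph{infinitely many} plurigenera (Example~\ref{pg.jump.3d.exmp}\eqref{pg.jump.3d.exmp-5}), produced from the indecomposable unipotent rank-2 bundle $F_2(E)$ on an elliptic curve: in characteristic $p$ there is a degree $p$ isogeny $\rho\colon E'\to E$ with $\rho^*=0$ on $H^1(E,\o_E)$, so $\rho^*F_2$ splits, $|pD_1|$ becomes a free pencil on the ruled surface $S_1=\p_E(F_2)$ while $h^0\bigl(S_1,\o_{S_1}(mD_1)\bigr)<m+1=h^0\bigl(S_0,\o_{S_0}(mD_0)\bigr)$ for all $m\geq 1$ (Claims~\ref{ell.ruled.say-1}--\ref{ell.ruled.say-3}); a cone construction (Sections~\ref{mck.t.say}--\ref{coh.comp.on.Y}) then lifts this elliptic-fibration jump to a smooth family of general-type dlt 3-fold pairs with $K_Y+\Delta$ relatively semiample and big. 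Without some such source of a plurigenus jump you have no mechanism forcing $(Y^{\rm c},\Delta^{\rm c})_0\neq \bigl((Y_0)^{\rm c},(\Delta_0)^{\rm c}\bigr)$, hence no pathology; the non-weakly-normal center in the paper is a \emph{consequence} of the jump, not an input one can assume.

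Two further concrete gaps. First, your codimension-one picture is misaimed: you propose an inseparable gluing along a prime divisor of the central fiber (``cusps instead of nodes''), but in the paper's examples the central fiber is non-$S_2$ along a $1$-dimensional locus---codimension \emph{two} in the fiber---and non-weak-normality is witnessed by the finite purely inseparable morphism $\bigl((Y_0)^{\rm c},(\Delta_0)^{\rm c}\bigr)\to (Y^{\rm c},\Delta^{\rm c})_0$, an isomorphism outside that curve. Whether a $3$-dimensional slc pair can have a non-weakly-normal \emph{Cartier} boundary divisor is posed as an open problem (Question~\ref{3d.dn.ques}), so the divisorial configuration you plan to build directly is not known to exist. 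Second, base-change persistence cannot rest on the slogan that inseparable identifications survive finite covers: the paper proves that $\bigl(Y,(Y_t+\Delta)\bigr)$ stays dlt after \emph{arbitrary} base change via an explicit multiplicity criterion (Lemma~\ref{mult.1.lc.lem}, Corollary~\ref{mult.1.bch.cor})---a hands-on substitute for inversion of adjunction, which is unavailable in characteristic $p$---so that forming the relative canonical model commutes with base change, and only then does uniqueness of the stable limit follow from \cite[Theorem~11.40]{k-modbook}. Your proposal would need an analogous verification at exactly this point; as written, the appeal to ``fine control of adjunction in characteristic $p$'' names the difficulty without resolving it.
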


\begin{rem}
 In our examples, the approach of \cite{ksb} does produce a canonical model $g^{\rm c}\colon X^{\rm c}\to C$; the problem is that some of its fibers do not satisfy Serre's condition $S_2$. However, the normalization of each fiber is a stable variety. So we may well have a proper moduli space, but we need to allow some families whose fibers are stable only after normalization. However, we do not have a precise conjecture on which families should be allowed.
  
 It is known that moduli theory is more complicated in positive characteristic. This is partly due to the failure of Kodaira's vanishing theorem, and to the current lack of resolution of singularities.  The appearance of $p$-torsion in the class group leads to other problems.  However, it was expected that once these are correctly accounted for, the rest of the arguments would go through. For surfaces, large parts of the theory have been worked out in
 \cite{pat-proj, abp-prep}.

Surprisingly, in Example~\ref{pg.jump.3d.exmp} the normalization of the central fiber does lift to characteristic 0, so Kodaira's vanishing theorem holds on it by \cite{del-ill}.  The problem comes from the nearby fibers that exist only in characteristic $p>0$.  These nearby fibers are normal, but not CM by Lemma~\ref{non.CM.Y1.lem}.  In characteristic 0, being CM is a deformation-invariant property of stable varieties; see \cite{k-db}. It is thus possible that such examples do not occur if one works solely in the closure of the moduli of smooth varieties of general type in dimensions 2 and 3; see \cite{2020arXiv200603571A, brivio-2}. As we discuss in Example~\ref{ktb.exmps}, there are such examples starting in dimension 5 in characteristic 2.

Therefore, while the examples obtained so far indicate that the positive-characteristic moduli theory is much more subtle, they do not rule out the possibility that, in the end, the necessary modifications are mainly technical.
\end{rem}

A series of non-CM singularities is discussed in the papers \cite{MR3832406, MR3994312, bern-nonsn}, but it did not seem to have been observed that they can be used to construct stable degenerations, where the generic fibers are smooth with ample canonical class, and the special fibers have isolated, non-normal singularities.  The dimension of the resulting examples is about twice the characteristic.

\begin{exmp}[(Kov\'acs--Totaro--Bernasconi examples)]\label{ktb.exmps}
  Let $X=G/P$ be a projective, homogeneous space. The cases when $P$ is non-reduced were studied in \cite{MR1247497, MR1385284, MR1453823}.  Most of these are not Fano, but if $X$ is Fano and Kodaira vanishing fails, then cones over $X$
  give interesting singularities; see  \cite{MR3832406, MR3994312, bern-nonsn}.  The smallest dimension of such an $X$ is 5 in characteristic 2, and about twice the characteristic in general.

Assume that $-K_X=mH$ for some ample divisor $H$ for some $m\geq 1$. Then  $|H|$ is very ample by \cite{MR1385284}, so it gives an embedding $X\into \p^N$, where $N=\dim |H|$. Let $Y:=C(X, H)\subset \p^{N+1}$ be the projective cone over $X$ with vertex $v$. Then
   $$
   H^{i+1}_v(Y, \o_Y)\cong \tsum_{m\in\z} H^i\bigl(X, \o_X(mH)\bigr)
   $$
by \cite[Remark~3.12]{kk-singbook}.  If $H^1\bigl(X, \o_X(H)\bigr)\neq 0$, then $H^{2}_v(Y, \o_Y)\neq 0 $, hence $\depth_v\o_Y=2$.

Let $D\in |H|$ be a smooth divisor and $D_Y\subset Y$ its preimage. Since $K_X+ D\sim (m-1)H$, the pair $(Y, D_Y)$ is log canonical if $m=1$ and canonical  if $m>1$ by \cite[Section~3.1]{kk-singbook}. The divisor $D_Y$ is Cartier on $Y$ by \cite[Proposition~3.14(2)]{kk-singbook}.

There is a natural morphism $\pi\colon C(D, H|_D)\to D_Y$, which is an isomorphism outside the vertex. Furthermore, $\pi$ is an isomorphism if and only if $\depth_vD_Y\geq 2$, which holds if and only if $\depth_vY\geq 3$. Thus, if $H^1\bigl(X, \o_X(H)\bigr)\neq 0$, then $D_Y$ is not normal.  Intersecting $Y$ with a pencil of hyperplanes gives a locally stable degeneration with generic fiber $X$ and special fiber $D_Y$.  Taking a suitable cyclic cover as in \cite[Section~2.4]{km-book}, we get a series of examples of stable degenerations, where the generic fibers are smooth varieties with ample canonical class, and the special fibers have isolated non-normal singularities.

The cases described in \cite{MR3994312} have $m= 2$. Then the normalization of $D_Y$ has canonical singularities; hence these deformations take place in what is usually considered the `interior' of the moduli space.  On the other hand, these constructions start with a variety for which Kodaira vanishing fails, so they cannot be lifted to characteristic 0.
   
{\it Aside.} The series of non-CM, quotient singularities of \cite{MR3929517} all have $\depth\geq 3$ by \cite{MR581583}, so they do not give similar examples.
\end{exmp}

All the ingredients going into the proof of Theorem~\ref{no.ss.deg.charop.thm} seem to be well known. Unipotent bundles on elliptic curves have been studied in \cite{Atiyah57, MR0292847, oda-ell}.  Closely related examples of pathological families of elliptic surfaces are given in \cite{brivio}.  Using cones to go from a lower-dimensional non-general type variety to a higher-dimensional general type one has been utilized many times.  For example, \cite{mck-lec-2006} used it to show that log abundance for general type varieties in dimension $n+1$ implies log abundance for all varieties in dimension $n$; see Section~\ref{mck.t.say}.  Related results on the deformation invariance of plurigenera also appear  in \cite{MR4298650, be-bi-st}.

\medskip

The main step is the following; see \cite[Definition~2.37]{km-book}
for the definition of {\it divisorial log terminal,} abbreviated as {\it dlt.}

\begin{exmp}\label{pg.jump.3d.exmp}
In every characteristic $p>0$, there exist a morphism $g\colon Y\to \a^1$ and an effective $\q$-divisor $\Delta$ on $Y$ such that
  \begin{enumerate}
\item\label{pg.jump.3d.exmp-1} $g\colon Y\to \a^1$ is smooth, projective, of relative dimension 3;
\item\label{pg.jump.3d.exmp-2} $K_Y+\Delta$ is $g$-semiample and $g$-big;
\item\label{pg.jump.3d.exmp-3} the fibers  $(Y_t, \Delta_t)$ are dlt;
  \item\label{pg.jump.3d.exmp-4} $(Y, Y_t+\Delta)$ is dlt for every $t\in \a^1$, and this continues to hold after any base change $C\to \a^1$;
\item\label{pg.jump.3d.exmp-5} $H^0\bigl(Y_0, \omega^{m}_{Y_0}(m\Delta_0)\bigr)> H^0\bigl(Y_t, \omega^{m}_{Y_t}(m\Delta_t)\bigr)$ for $t\neq 0$ and $ m\geq 1$ sufficiently divisible.
\end{enumerate}
\end{exmp}

As in \cite[Theorem~4.9]{kk-singbook},
we have \eqref{pg.jump.3d.exmp-4} $\Rightarrow$ \eqref{pg.jump.3d.exmp-3} by adjunction. In characteristic 0, inversion of adjunction says that \eqref{pg.jump.3d.exmp-3} implies the log canonical variant of \eqref{pg.jump.3d.exmp-4}. The key new feature is the jump of the plurigenera \eqref{pg.jump.3d.exmp-5}.

There are many examples in positive characteristic where finitely many of the plurigenera jump.  The surfaces produced in \cite{brivio} lead to families of elliptic pairs (with terminal singularities) where infinitely many of the plurigenera jump.  These surfaces can also be used (instead of Example~\ref{pg.jump.2d.exmp}) to obtain families of 3-folds as in Example~\ref{pg.jump.3d.exmp}.

The new feature of Example~\ref{pg.jump.3d.exmp} is that the fibers are of general type, and infinitely many of the plurigenera jump.  This has strong consequences.

\begin{proof}[Proof of Theorem~\ref{no.ss.deg.charop.thm}] We deduce the theorem from Example~\ref{pg.jump.3d.exmp}. 
 Since $K_Y+\Delta$ is $g$-semiample and $g$-big, there exist a morphism with connected fibers $h\colon Y\to Y'$ and a relatively ample divisor $D'$ on $Y'\to \a^1$ such that $m_0(K_Y+\Delta)\sim h^*(m_0D')$ for some $m_0>0$.  By definition, the relative canonical model is
  $$
  \bigl(Y^{\rm c}, \Delta^{\rm c}\bigr):=\proj_{\a^1}\oplus_{m\geq 0}
  g_*\omega^{m}_{Y}(\rdown{m\Delta}).
  $$
On the right-hand side, we can change the summation to multiples of $m_0$, which shows that $\bigl(Y^{\rm c}, \Delta^{\rm c}\bigr)=\bigl(Y', h_*\Delta\bigr)$.  The fiber over the origin is
$$
  \bigl(Y^{\rm c}, \Delta^{\rm c}\bigr)_0=\proj\oplus_{m\geq 0}
  \im\bigl[g_*\omega^{mm_0}_{Y}(mm_0\Delta)\to H^0\bigl(Y_0, \omega^{mm_0}_{Y_0}(mm_0\Delta_0)\bigr)\bigr],
  $$
whereas the canonical model of the   fiber  $(Y_0, \Delta_0)$ is
  $$
  \bigl((Y_0)^{\rm c}, (\Delta_0)^{\rm c}\bigr):=\proj\oplus_{m\geq 0}
   H^0\bigl(Y_0, \omega^{mm_0}_{Y_0}(mm_0\Delta_0)\bigr).
   $$
By \eqref{pg.jump.3d.exmp-5}, we see that
   $$
   \bigl(Y^{\rm c}, \Delta^{\rm c}\bigr)_0\neq \bigl((Y_0)^{\rm c}, (\Delta_0)^{\rm c}\bigr).
   $$
   The properties \eqref{pg.jump.3d.exmp-1}--\eqref{pg.jump.3d.exmp-5} continue to hold after any base change $C\to \a^1$, so $\bigl(Y^{\rm c}, \Delta^{\rm c}\bigr)\times_{\a^1}C\to C $ is the canonical model of $\bigl(Y, \Delta\bigr)\times_{\a^1}C\to C $.  Thus $\bigl(Y^{\rm c}, \Delta^{\rm c}\bigr)_0$ is the unique stable degeneration of the family over $\a^1\setminus\{0\}$ by \cite[Theorem~11.40]{k-modbook}.
\end{proof}

In our examples, $\sing (Y_t)^{\rm c}$ is 1-dimensional. Localizing at its generic point gives a simple elliptic singularity of dimension 2.  There is a natural morphism
$$
\bigl((Y_0)^{\rm c}, (\Delta_0)^{\rm c}\bigr)\To \bigl(Y^{\rm c}, \Delta^{\rm c}\bigr)_0,
$$
which is an isomorphism outside the singular set and purely inseparable over $\sing (Y^{\rm c})_0$.  These imply the following; see \cite[Section~7.2]{rc-book} for {\it weak normality}.

\begin{cor}\label{non.wn.4.cor}
For any $p>0$, there are log canonical 4-fold pairs $(X, S+\Delta)$ of characteristic $p$, where $S$ is a Cartier divisor that is not weakly normal.
\end{cor}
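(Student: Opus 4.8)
The plan is to use the total space of the relative canonical model itself as the ambient $4$-fold. In the notation of the proof of Theorem~\ref{no.ss.deg.charop.thm}, set $X:=Y^{\rm c}$, let $S:=(Y^{\rm c})_0$ be the fiber over the origin, and put $\Delta:=\Delta^{\rm c}$. Since $g^{\rm c}\colon Y^{\rm c}\to\a^1$ has relative dimension $3$, the variety $X$ is a normal $4$-fold. Because $S=\proj\bigoplus_{m}\im[\cdots]$ is $\proj$ of a graded domain, it is integral, hence reduced, so the question of weak normality is meaningful. As $\{0\}\subset\a^1$ is a Cartier divisor on a smooth curve, its pullback $S=(g^{\rm c})^*\{0\}$ is Cartier on $X$ even though $X$ is singular; this supplies the Cartier divisor of the statement.

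Next I would establish that $(X,S+\Delta)$ is log canonical by descending along the crepant contraction $h\colon Y\to Y^{\rm c}$. By construction $K_Y+\Delta=h^*(K_{Y^{\rm c}}+\Delta^{\rm c})$, and since both $Y_0$ and $S=(Y^{\rm c})_0$ are pullbacks of $\{0\}$ under $g=g^{\rm c}\circ h$, one also has $Y_0=h^*S$. Adding these gives
$$
K_Y+Y_0+\Delta=h^*\bigl(K_X+S+\Delta\bigr),
$$
so $h$ is crepant for the two pairs and $h_*(Y_0+\Delta)=S+\Delta$. By \eqref{pg.jump.3d.exmp-4} the pair $(Y,Y_0+\Delta)$ is dlt, hence log canonical; as the log discrepancies of the two pairs agree along a crepant morphism, log canonicity descends to $(X,S+\Delta)$.

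Finally, to show $S$ is not weakly normal — the heart of the matter — I would invoke the natural finite morphism
$$
f\colon\bigl((Y_0)^{\rm c},(\Delta_0)^{\rm c}\bigr)\To \bigl(Y^{\rm c},\Delta^{\rm c}\bigr)_0=S,
$$
induced by the inclusion of the image ring into $\bigoplus_m H^0\bigl(Y_0,\omega^{mm_0}_{Y_0}(mm_0\Delta_0)\bigr)$. By the discussion preceding the statement, $f$ is an isomorphism away from the $1$-dimensional $\sing(Y^{\rm c})_0$ and purely inseparable over it; being finite, surjective and radicial, it is a birational universal homeomorphism, and its source is normal (being a canonical model), hence reduced. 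By \eqref{pg.jump.3d.exmp-5} the two graded rings differ in infinitely many degrees, so $f$ is \emph{not} an isomorphism. Since a reduced scheme is weakly normal precisely when every finite birational universal homeomorphism onto it from a reduced scheme is an isomorphism, the existence of this nontrivial $f$ forces $S$ to fail weak normality (see \cite[Section~7.2]{rc-book}). The main obstacle is exactly the non-triviality of $f$ over the singular locus: this is where one genuinely needs the purely inseparable, simple-elliptic structure produced by the unipotent-bundle construction of Example~\ref{pg.jump.3d.exmp}, rather than any formal manipulation, since in characteristic $0$ the analogous morphism would be an isomorphism and $S$ would be weakly normal.
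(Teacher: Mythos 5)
Your proposal is correct and follows essentially the same route as the paper: the paper's proof takes exactly the pair $\bigl(Y^{\rm c}, \Delta^{\rm c}+(Y^{\rm c})_0\bigr)$, notes that $(Y^{\rm c})_0$ is Cartier and a log canonical center, and deduces failure of weak normality from the finite, birational, purely inseparable morphism $\bigl((Y_0)^{\rm c}, (\Delta_0)^{\rm c}\bigr)\to \bigl(Y^{\rm c}, \Delta^{\rm c}\bigr)_0$, citing the same reference \cite[Section~7.2]{rc-book}. You merely make explicit the details the paper leaves implicit (crepant descent of log canonicity along $h$, Cartierness of the fiber as a pullback of $\{0\}\subset\a^1$, and non-isomorphy of $f$ via the jump \eqref{pg.jump.3d.exmp-5}), all of which check out.
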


\begin{proof}
The pair $\bigl(Y^{\rm c}, \Delta^{\rm c}+(Y^{\rm c})_0\bigr)$ is a log canonical 4-fold, $(Y^{\rm c})_0$ is a Cartier divisor and a log canonical center but not weakly normal.
\end{proof}

\begin{rem}
In characteristic 0, weakly normal coincides with seminormal, and all log canonical centers are seminormal by \cite{ambro, fuj-book}; see also \cite[Theorem~5.14]{kk-singbook}.  A series of papers culminating in \cite{many-p} establish that MMP for 3-folds works in characteristics at least $7$, just as in characteristic 0. This led to a hope that, in any fixed dimension, new phenomena appear only in low characteristics; see \cite{MR3994312, bern-nonsn, h-w-1, h-w-p} for such results.  By Corollary~\ref{non.wn.4.cor}, this is not the case for 4-folds.
  
However, in MMP one can frequently work with dlt pairs only; thus a dlt example would be the main test case.  Our $\bigl(Y^{\rm c}, \Delta^{\rm c}\bigr)$ are not dlt; see Lemma~\ref{non.CM.Y1.lem}.

Let $(X, S)$ be a 3-dimensional plt pair. If $\chr\geq 7$, then $S$ is normal, but there are counterexamples in characteristics 2; see \cite{cas-tan-dp2}. See also \cite{ber-dp3, lac-dp, 2020arXiv200603571A} for related examples.  However, no such example seems to be known where $S$ is Cartier.
\end{rem}

\begin{ques}\label{3d.dn.ques}
Let $(X, S+\Delta)$ be a 3-dimensional, semi-log-canonical pair, where $S$ is a Cartier divisor. Is $S$ weakly normal?
  \end{ques}

We start the construction of our 3-dimensional examples with a family of elliptic surfaces, similarly to \cite{brivio}.  Then we build these up to dimension 3.

\begin{exmp}\label{pg.jump.2d.exmp}
  We construct a morphism $g\colon(S, \Delta+ \Theta)\to \a^1$ of relative dimension 2, where $g\colon S\to \a^1$ is smooth, projective, the fibers $(S_t, \Delta_t+\Theta_t)$ are terminal, $K_S+\Delta$ is $g$-semiample, numerically trivial, and $K_S+\Delta+ \Theta$ is $g$-semiample of Kodaira dimension 1.  Moreover, all large plurigenera jump:
$$
H^0\bigl(S_0, \omega^{m}_{S_0}(\rdown{m\Delta_0+m\Theta_0})\bigr)>
H^0\bigl(S_t, \omega^{m}_{S_t}(\rdown{m\Delta_t+m\Theta_t})\bigr)
\quad \text{for}\; t\neq 0, m\gg 1.
$$
\end{exmp}

The key is the behavior of unipotent vector bundles on elliptic curves in positive characteristic.

\begin{say}[{\bf Elliptic ruled surfaces}]\label{ell.ruled.say}
Let $E$ be an elliptic curve. A vector bundle is {\it unipotent} if it can be written as a successive extension of copies of $\o_E$.

By \cite{Atiyah57}, for every $r$, there is a unique rank $r$, indecomposable, unipotent vector bundle $F_r=F_r(E)$, and every unipotent vector bundle is a direct sum of these $F_r$.  Thus a unipotent vector bundle $U$ is isomorphic to $F_r$ if and only if $\rank U=r$ and $h^0(E, U)=1$.

Note that $F_2(E)$ sits in an exact sequence
  $$
  0\To \o_E\To F_2(E)\To \o_E\To 0
  $$
that corresponds to a non-zero class in $H^1(E, \o_E)$.

Let $\tau\colon E'\to E$ be a non-constant map of elliptic curves. Then we get
  $$
  0\To \o_{E'}\To \tau^*F_2(E)\To \o_{E'}\To 0.
  $$
This shows that
\begin{itemize}
  \item if $\tau^*\colon H^1(E, \o_E)\to H^1(E', \o_{E'}) $ is non-zero, then $\tau^*F_2(E)\cong F_2(E')$;
 \item  if $\tau^*\colon H^1(E, \o_E)\to H^1(E', \o_{E'}) $ is zero, then
  $\tau^*F_2(E)\cong \o_{E'}\oplus \o_{E'}$. This holds if and only if $h^0\bigl(E',\tau^*F_2(E)\bigr)\geq 2$.
\end{itemize}

Set $m=\deg \tau$.  If the characteristic does not divide $m$, then $\frac1{m}\trace_{E'/E}\colon \tau_*\o_{E'}\to \o_E$ is a splitting of $\o_E\to \tau_*\o_{E'}$. Thus $ H^1(E, \o_E)\to H^1(E', \o_{E'}) $ is an injection. In particular, we are always in the first case in characteristic 0.
  
  Note that $\tau^*\colon H^1(E, \o_E)\to H^1(E', \o_{E'})$ is the tangent map of $g^*\colon \pic(E)\to \pic(E')$ at the origin. (This holds for Abelian varieties; see \cite[Section~15]{mumf-abvar}.) Thus $\tau^*\colon H^1(E, \o_E)\to H^1(E', \o_{E'})$ is the zero map if and only if $\tau^*\colon \pic(E)\to \pic(E')$ is inseparable. This cannot happen in characteristic 0, but in characteristic $p>0$, there is always a degree $p$ map $ \rho\colon E'\to E$ such that $\rho^*\colon H^1(E, \o_E)\to H^1(E', \o_{E'})$ is the zero map.  (For higher-dimensional Abelian varieties, $\rho^*$ has a 1-dimensional kernel; see \cite[Section~15]{mumf-abvar}.)

Projectivising $F_2$, we get a $\p^1$-bundle $\pi_1\colon S_1\to E$ with a unique section $D_1$ with self-intersection 0. Note that $K_{S_1}+2D_1\sim 0$ and the normal bundle of $D_1$ is trivial.

Let $C\subset S_1$ be an irreducible, reduced curve that is disjoint from $D_1$. Then it is numerically equivalent to a multiple of $D_1$; hence $p_a(C)=1$ by adjunction. The projection $\tau\colon C\to E$ is dominant; hence $C$ is a smooth, elliptic curve.  The fiber product $S_1\times_EC$ has two disjoint section; hence $\tau^*F_2(E)$ is trivial.  Thus $H^1(E, \o_E)\to H^1(C, \o_C)$ is the zero map.

Conversely, let $g\colon C\to E$ be a non-constant morphism such that $g^*\colon H^1(E, \o_E)\to H^1(C, \o_C)$ is the zero map. Then $g^*F_2$ splits, giving $C\to S_1$ whose image is linearly equivalent to $\deg(g)\cdot D_1$.

\begin{claim}\label{ell.ruled.say-1}
We have $(\pi_1)_*\o_{S_1}(mD_1)=F_{m+1}$  if either $\chr k=0$ or $m<\chr k$.
\end{claim}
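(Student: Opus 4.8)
The plan is to reduce the claim to Atiyah's numerical criterion quoted above, namely that a unipotent bundle of rank $r$ is isomorphic to $F_r$ precisely when its space of global sections is one-dimensional. The first reduction identifies $(\pi_1)_*\o_{S_1}(mD_1)$ with a symmetric power of $F_2$. By Grothendieck's computation of the cohomology of a projective bundle (using the convention $(\pi_1)_*\o_{\p(F_2)}(1)=F_2$), it suffices to prove $\o_{S_1}(D_1)\cong\o_{\p(F_2)}(1)$, after which $(\pi_1)_*\o_{S_1}(mD_1)=\sym^m F_2$. To see this, write $\o_{S_1}(D_1)=\o_{\p(F_2)}(a)\otimes\pi_1^*B$ using $\pic(S_1)=\z\cdot\o_{\p(F_2)}(1)\oplus\pi_1^*\pic(E)$; intersecting with a fibre $f$ gives $a=D_1\cdot f=1$, while restricting to $D_1\cong E$ and invoking the triviality of $N_{D_1/S_1}$ forces $B\cong\o_E$ (here one uses that $D_1$ is cut out by the unique degree-$0$ quotient $F_2\onto\o_E$, so that $\o_{\p(F_2)}(1)|_{D_1}\cong\o_E$). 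The claim thus becomes the purely bundle-theoretic assertion $\sym^m F_2\cong F_{m+1}$ whenever $\chr k=0$ or $m<\chr k$.

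Next I would verify the two hypotheses of Atiyah's criterion for $U=\sym^m F_2$. That $U$ is unipotent of rank $m+1$ holds in every characteristic: the two-step filtration $0\to\o_E\to F_2\to\o_E\to0$ induces a filtration on $\sym^m F_2$ whose successive quotients are the line bundles $\o_E^{\otimes i}\otimes\o_E^{\otimes(m-i)}\cong\o_E$, so $\sym^m F_2$ is a successive extension of copies of $\o_E$. The real content is the equality $h^0(\sym^m F_2)=1$, which I would establish by induction on $m$ from the bottom step of this filtration,
\[
0\to\sym^{m-1}F_2\xrightarrow{\ \cdot e_1\ }\sym^m F_2\to\o_E\to0,
\]
where $e_1$ is a local generator of the sub-line bundle $\o_E\subset F_2$. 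In the resulting long exact sequence, the edge map $H^0(\sym^m F_2)\to H^0(\o_E)$ is an isomorphism onto $k$ or is zero according to whether the connecting homomorphism $\delta_m\colon H^0(\o_E)\to H^1(\sym^{m-1}F_2)$ vanishes. Since $h^0(\sym^0 F_2)=h^0(\o_E)=1$, it suffices to show that $\delta_m\neq0$ for all $1\le m<\chr k$ (and for all $m$ when $\chr k=0$).

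The main point, and the step where the characteristic intervenes, is the computation of $\delta_m$. Representing $F_2$ by a unipotent cocycle $\left(\begin{smallmatrix}1&c_{\alpha\beta}\\0&1\end{smallmatrix}\right)$ with $[c_{\alpha\beta}]=\epsilon$ the nonzero class in $H^1(E,\o_E)$ defining $F_2$, the transition law is $e_1\mapsto e_1$, $e_2\mapsto c\,e_1+e_2$ on a local frame. I would lift the local generator $e_2^m$ of the quotient $\o_E$ and compute its coboundary $(c\,e_1+e_2)^m-e_2^m=\sum_{j\ge1}\binom{m}{j}c^j e_1^j e_2^{m-j}$, which represents $\delta_m$ in $H^1(\sym^{m-1}F_2)$. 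Composing with the surjection $\sym^{m-1}F_2\onto\o_E$ onto the top graded piece annihilates every term except $j=1$ and yields $\binom{m}{1}\epsilon=m\cdot\epsilon\in H^1(E,\o_E)$. Because $0<m<\chr k$ (or $\chr k=0$) we have $m\neq0$ in $k$, so $m\cdot\epsilon\neq0$, whence $\delta_m\neq0$, $h^0(\sym^m F_2)=1$, and Atiyah's criterion gives $\sym^m F_2\cong F_{m+1}$. I expect this cocycle identification of $\delta_m$ with $m\cdot\epsilon$ to be the only delicate point: it is exactly the factor $\binom{m}{1}=m$ that forces the hypothesis $m<\chr k$, in agreement with the fact that $\sym^p F_2$ already decomposes.
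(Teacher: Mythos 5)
Your argument is correct, but it takes a genuinely different route from the paper's. The paper never identifies $(\pi_1)_*\o_{S_1}(mD_1)$ with $\sym^m F_2$; instead it proves $h^0\bigl(S_1,\o_{S_1}(mD_1)\bigr)=1$ geometrically: any integral curve $C$ numerically proportional to $D_1$ satisfies $(C\cdot D_1)=0$, hence equals $D_1$ or is disjoint from it, and a disjoint curve would give two disjoint sections of $S_1\times_EC$, splitting $\tau^*F_2$ for the degree-$m'$ cover $\tau\colon C\to E$ with $m'\leq m$ --- impossible when $m<\chr k$ (or $\chr k=0$) because $\tfrac{1}{m'}\trace_{C/E}$ splits $\o_E\to\tau_*\o_C$ and forces $H^1(E,\o_E)\into H^1(C,\o_C)$. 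Unipotence is then read off by pushing forward the filtration $\o_{S_1}\subset\o_{S_1}(D_1)\subset\cdots\subset\o_{S_1}(mD_1)$, whose graded quotients push forward to $\o_E$ since $N_{D_1/S_1}$ is trivial; both proofs finish with the same appeal to Atiyah's numerical criterion ($\rank=m+1$ and $h^0=1$ force $F_{m+1}$). Your route buys self-containment and transparency: the explicit connecting map $\delta_m=m\cdot\epsilon$ shows exactly where the characteristic intervenes, makes the bound $m<\chr k$ visibly sharp (with $\delta_p=0$ foreshadowing the splitting in Claim~\ref{ell.ruled.say-2}), and yields the stronger structural statement $\sym^mF_2\cong F_{m+1}$. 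The paper's route buys economy: it reuses the dictionary, already set up in Section~\ref{ell.ruled.say}, between curves disjoint from $D_1$ and splittings of $\tau^*F_2$, and its filtration argument is recycled verbatim in Claims~\ref{ell.ruled.say-2} and~\ref{ell.ruled.say-3}, whereas you must additionally justify $\o_{S_1}(D_1)\cong\o_{\p(F_2)}(1)$ (which you do correctly, via $\o_{S_1}(D_1)|_{D_1}\cong N_{D_1/S_1}\cong\o_E$ and $\o_{\p(F_2)}(1)|_{D_1}\cong\o_E$). One point to state explicitly in a final write-up: the induction needs $\delta_{m'}\neq 0$ for \emph{every} $1\leq m'\leq m$, which your hypothesis $m<\chr k$ does provide since then no such $m'$ vanishes in $k$.
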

  
\begin{proof}
  Let $C$ be an integral curve such that $C\sim mD_1$.  Then $(C\cdot D_1)=0$, so either $C$ and $D_1$ are equal, or they are disjoint. As we noted above, the latter cannot happen if either $\chr k=0$ or $m<\chr k$.  Thus $h^0(S_1, \o_{S_1}(mD_1))=1$ for such values. Pushing forward the filtration
$$
\o_{S_1} \subset \o_{S_1}(D_1) \subset \cdots \subset \o_{S_1}(mD_1)
$$
gives that  $(\pi_1)_*\o_{S_1}(mD_1)$ is unipotent.
\end{proof}

\begin{claim}\label{ell.ruled.say-2}
  If $\chr k=p>0$, then $(\pi_1)_*\o_{S_1}(pD_1)=\o_E\oplus F_{p}$.
\end{claim}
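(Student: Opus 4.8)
The plan is to set $U:=(\pi_1)_*\o_{S_1}(pD_1)$ and establish two facts: that $U$ is a unipotent bundle of rank $p+1$ fitting into a short exact sequence $0\to F_p\to U\to \o_E\to 0$, and that $h^0(E,U)=2$. Granting these, the conclusion is immediate, since $h^0(E,U)=2$ forces the displayed extension to split, giving $U\cong \o_E\oplus F_p$.

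For the structural statement I would push forward the restriction sequence
$$
0\to \o_{S_1}\bigl((p-1)D_1\bigr)\to \o_{S_1}(pD_1)\to \o_{D_1}\to 0,
$$
where the quotient is $\o_{D_1}$ because the normal bundle $\o_{S_1}(D_1)|_{D_1}$ is trivial, so $\o_{S_1}(pD_1)|_{D_1}\cong \o_E$. Since $\o_{S_1}\bigl((p-1)D_1\bigr)$ has fibrewise degree $p-1\ge 0$, we have $R^1(\pi_1)_*\o_{S_1}\bigl((p-1)D_1\bigr)=0$, while $(\pi_1)_*\o_{D_1}=\o_E$ as $\pi_1|_{D_1}$ is an isomorphism onto $E$. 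Applying Claim~\ref{ell.ruled.say-1} with $(p-1)<p$ identifies $(\pi_1)_*\o_{S_1}\bigl((p-1)D_1\bigr)$ with $F_p$, so the pushforward reads $0\to F_p\to U\to \o_E\to 0$; in particular $U$ is an extension of unipotent bundles, hence unipotent of rank $p+1$.

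The heart of the matter, and where positive characteristic enters, is the computation $h^0(E,U)=2$. The upper bound $\le 2$ follows by taking global sections in the displayed sequence on $S_1$: $h^0\bigl(\o_{S_1}((p-1)D_1)\bigr)=1$ by Claim~\ref{ell.ruled.say-1} and $h^0(\o_{D_1})=1$. For the lower bound I would produce a second, independent section using the degree-$p$ isogeny $\rho\colon E'\to E$ with $\rho^*=0$ on $H^1(E,\o_E)$: as explained before Claim~\ref{ell.ruled.say-1}, $\rho^*F_2$ then splits and yields a curve $C\subset S_1$ that is disjoint from $D_1$ and linearly equivalent to $pD_1$; thus $pD_1$ and $C$ are distinct members of $|pD_1|$, whence $h^0(E,U)=h^0\bigl(S_1,\o_{S_1}(pD_1)\bigr)\ge 2$. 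The existence of a divisor disjoint from $D_1$ is precisely what fails for $m<p$, and is the one genuinely characteristic-$p$ input; I expect making this second section explicit to be the main obstacle. Finally, feeding $h^0(E,U)=2$ into the long exact cohomology sequence of $0\to F_p\to U\to \o_E\to 0$ and using $h^0(F_p)=h^1(F_p)=h^0(\o_E)=1$ forces the connecting map $H^0(\o_E)\to H^1(F_p)$ to vanish; since this map sends $1$ to the extension class in $\ext^1(\o_E,F_p)\cong H^1(E,F_p)$, the extension splits and $U\cong \o_E\oplus F_p$.
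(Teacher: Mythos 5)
Your proposal is correct and follows essentially the same route as the paper: the lower bound $h^0\bigl(S_1,\o_{S_1}(pD_1)\bigr)\geq 2$ via the degree-$p$ cover $\rho$ splitting $F_2$, and the upper bound together with the subsheaf $F_p\into (\pi_1)_*\o_{S_1}(pD_1)$ via pushing forward $0\to \o_{S_1}((p-1)D_1)\to \o_{S_1}(pD_1)\to \o_{D_1}\to 0$. Your final step --- observing that $h^0(U)=2$ forces the connecting map $H^0(E,\o_E)\to H^1(E,F_p)$ to vanish, which kills the extension class and splits the sequence --- is a clean, explicit justification of the deduction the paper compresses into ``These imply.''
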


\begin{proof}
Choose a degree $p$ map $ \rho\colon E'\to E$ such that $\rho^*\colon H^1(E, \o_E)\to H^1(E', \o_{E'})$ is zero.  Then pull-back by $\rho\colon E'\to E$ splits $F_2$, so $H^0(S_1, \o_{S_1}(pD_1))\geq 2$. On the other hand, from
$$
0\To  \o_{S_1}((p-1)D_1) \To  \o_{S_1}((pD_1)\To \o_{D_1}\To 0
$$
we get that $F_p\into (\pi_1)_*\o_{S_1}(pD_1)$ and $H^0(S_1, \o_{S_1}(pD_1))\leq 2$. These imply that we have $ (\pi_1)_*\o_{S_1}(pD_1)\bigr)\cong \o_E\oplus F_p$.
\end{proof}

It is not clear what the $(\pi_1)_*\o_{S_1}(mD_1)$ are for larger values of $m$, but we have the following.

\begin{claim}\label{ell.ruled.say-3}
In all cases, $h^0(S_1, \o_{S_1}(mD_1))<m+1$ for $m\geq 1$.
\end{claim}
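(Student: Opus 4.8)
The plan is to push forward to the base elliptic curve $E$ and estimate cohomology there by an induction on $m$ that exploits the triviality of the normal bundle of $D_1$. Set $V_m:=(\pi_1)_*\o_{S_1}(mD_1)$. Since $\o_{S_1}(mD_1)$ restricts to $\o_{\p^1}(m)$ on each fibre of $\pi_1$, the sheaf $V_m$ is locally free of rank $m+1$, the higher direct image $R^1(\pi_1)_*\o_{S_1}(mD_1)$ vanishes, and hence $h^0(S_1,\o_{S_1}(mD_1))=h^0(E,V_m)$. The naive fibrewise count already gives $h^0(E,V_m)\le m+1$, so the entire content of the Claim is the \emph{strict} inequality, and the hard part is to pin down where exactly one global section is lost.

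First I would record that, because $D_1$ has trivial normal bundle, $\o_{S_1}(mD_1)\resto{D_1}\cong\o_{D_1}$ for every $m$, under $D_1\cong E$. Restricting to $D_1$ then yields the short exact sequence $0\to\o_{S_1}((m-1)D_1)\to\o_{S_1}(mD_1)\to\o_{D_1}\to 0$, and left-exactness of global sections gives $h^0(S_1,\o_{S_1}(mD_1))\le h^0(S_1,\o_{S_1}((m-1)D_1))+h^0(\o_{D_1})=h^0(S_1,\o_{S_1}((m-1)D_1))+1$. For the base case $m=1$, Claim~\ref{ell.ruled.say-1} applies (since $1<\chr k$ whenever $\chr k>0$, and also when $\chr k=0$), so $V_1=F_2$ and therefore $h^0(S_1,\o_{S_1}(D_1))=h^0(E,F_2)=1$. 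Feeding this into the inequality and iterating gives $h^0(S_1,\o_{S_1}(mD_1))\le 1+(m-1)=m<m+1$ for all $m\ge 1$, as wanted.

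I expect the only delicate point — and the place where strictness is genuinely won — to be the base case. Generically a rank-$2$ degree-$0$ bundle has two-dimensional space of sections, but $V_1=F_2$ is the \emph{non-split} extension of $\o_E$ by $\o_E$ and so has $h^0=1$; geometrically, $D_1$ is the unique effective divisor in $|D_1|$. After that, each further twist by $D_1$ contributes at most the one section $h^0(\o_{D_1})=1$, so the initial deficit of one persists. A reassuring feature is that the argument is uniform in $m$ and never needs the structure of $V_m$ for $m\ge\chr k$, which is precisely the range left undetermined after Claims~\ref{ell.ruled.say-1} and~\ref{ell.ruled.say-2}. Alternatively, the same filtration shows $V_m$ is unipotent, so $h^0(E,V_m)$ equals its number of indecomposable summands by \cite{Atiyah57}, which is $<m+1=\rank V_m$ unless $V_m\cong\o_E^{\oplus(m+1)}$; this is excluded because $F_2=V_1$ injects into $V_m$ while $F_2$ admits no injection into a trivial bundle. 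The direct induction above, however, seems the cleanest route.
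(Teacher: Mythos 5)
Your proof is correct and takes essentially the same route as the paper: both bound $h^0$ via the filtration $\o_{S_1}\subset \o_{S_1}(D_1)\subset\cdots\subset\o_{S_1}(mD_1)$, whose graded quotients are $\o_{D_1}$ (trivial normal bundle), so each step adds at most one section, with the anchor value $h^0=1$ supplied by Claim~\ref{ell.ruled.say-1}. The only (harmless) difference is that you anchor the induction at $m=1$, where $V_1=F_2$ is non-split, obtaining $h^0\le m$, whereas the paper anchors at $m=p-1$ and gets the sharper bound $h^0\le m+2-p$; either yields the required strict inequality $<m+1$.
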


\begin{proof}
The filtration
$$
\o_{S_1}((p-1)D_1) \subset \o_{S_1}(pD_1) \subset \cdots \subset \o_{S_1}(mD_1)
$$
gives that $H^0(S_1, \o_{S_1}(mD_1))\leq 1+(m-p+1)=m+1-(p-1)<m+1$.
\end{proof}

Note that if $\chr k=0$, then in fact $h^0(S_1, \o_{S_1}(mD_1))=1$.  If $\chr k=p>0$, then we expect that $h^0(S_1, \o_{S_1}(mD_1))=\rdown{m/p}+1$.  (See \cite{schroer} for computing $F_m\otimes F_n$.)

\begin{subremark}\label{ell.ruled.say-4}
The reader can check that in our constructions, one can replace $E$ with any Abelian variety $A$ of dimension $n$. The key property is that there is a non-split extension
$$
0\To \o_A\To F_2(A)\To \o_A\To 0
$$
that becomes split after a suitable degree $p$ cover $A'\to A$.  By \cite[Section~15]{mumf-abvar}, this always holds in characteristic $p>0$.  The resulting $g\colon (Y, \Delta)\to \a^1$ has relative dimension $n+2$.  The singular sets of the canonical models $(Y_t)^{\rm c}$ are still 1-dimensional.
\end{subremark}
\end{say}

\begin{say}[\textbf{Construction of Example~\ref{pg.jump.2d.exmp}}]\label{pg.jump.2d.exmp.c}
Over $\a^1$, there is a degeneration of $F_2$ to the split bundle $\o_E\oplus \o_E$. Thus we get a vector bundle $F$ over $E\times \a^1$ whose restriction to $E\times \{0\}$ is $\o_E\oplus \o_E$ and to $E\times \{t\}$ is $F_2$ for $t\neq 0$.

Let $\pi\colon S\to E\times \a^1$ be the corresponding $\p^1$-bundle with the unique section $D$ in $|\o_{S}(1)|$.  If $0\neq t\in \a^1$, then we get $\pi_t\colon (S_t, D_t)\to E_t=E$ isomorphic to $\pi_1\colon (S_1, D_1)\to E$, and over $0\in\a^1$ we get $(S_0, D_0)\cong (E\times \p^1, E\times\{\infty\})$.

The jump in the plurigenera comes from Claim~\ref{ell.ruled.say-3}:
$$
H^0\bigl(S_0, \o_{S_0}(mD_0)\bigr)=m+1>H^0\bigl(S_1, \o_{S_1}(mD_1)\bigr).
$$
Now  consider $K_S+3D=(K_S+2D)+D\sim D$. It is nef, and by the above computations, all the plurigenera of the fibers jump at $c=0$.  However, $(S_t, 2D_t)$ is not log canonical, so this is not yet very useful.

\medskip

Now assume  that we are in characteristic $p>0$, and let $\rho\colon E'\to E$ be a degree $p$ morphism as in Section~\ref{ell.ruled.say} such that $\rho^*\colon H^1(E, \o_E)\to H^1(E', \o_{E'})$ is the zero map.

Consider $(\rho,1)\colon E'\times \a^1\to E\times \a^1$. The pull-back of $S_t$ is a trivial $\p^1$-bundle for every $t\in \a^1$, so $(\rho,1)^*S$ is a trivial $\p^1$-bundle over $E'\times \a^1$. Pushing forward, we get that $|pD|$ is a basepoint-free relative pencil on~$S$.

We can thus replace $2D$ with a linear combination
$$
\Delta:= \tfrac1{mp}G_1+\cdots + \tfrac1{mp}G_{2m}, 
$$
where the $G_i\in |pD|$ are general and $m$ is large enough.  We then have a pair $(S,\Delta)$ (we can even make the fibers terminal) such that $K_S+\Delta\simq 0$. Now we have a locally stable morphism
\begin{equation}\label{pg.jump.2d.exmp.c-1}
\bigl(S, \Delta+D\bigr)\to \a^1,
\end{equation}
where $K_S+\Delta+D$ is semiample, and all (large enough) plurigenera jump at $t=0$.  (If we want terminal fibers, we can also replace $D$ with $\tfrac1{mp}G'_1+\cdots + \tfrac1{mp}G'_{m} $ as above.)

Working on the surface $S_0$, the Iitaka fibration of $K_{S_0}+\Delta_0+D_0$ is the coordinate projection $\tau_0\colon  S_0\cong E\times \p^1\to \p^1$.  However, the restriction of the (relative) Iitaka fibration of $S$ to $S_0$ is given by a pencil in $|p D_0|$ whose members are geometrically connected.  The only such pencil is the composite of $\tau_0$ with the Frobenius morphism $\p^1\to \p^1$.
\end{say}
\medskip

This example has Kodaira dimension 1, but we can use it to get a general type
example in one dimension higher, using the following method.

\begin{say}\label{mck.t.say}
Let $Z$ be a projective variety whose pluricanonical maps behave in unexpected ways. Then \cite{mck-lec-2006} constructs a general type pair (of dimension one higher) whose pluricanonical maps also behave unexpectedly.

Choose a sufficiently general embedding $Z\into \p^N$, and let $C\subset \p^{N+1}$ be the cone over $Z$ with vertex $v\in C$.

Blow up $v$ to get $Y\to C$ with exceptional divisor $E\cong Z$.  By adjunction, $ \omega_Y(E)|_E\cong \omega_E$. So, {\em assuming} that the maps
\begin{equation}\label{mck.t.say-1}
  H^0\bigl(Y, \omega^m_Y(mE)\bigr)\To H^0(E, \omega^m_E)\cong H^0(Z, \omega^m_Z)
\end{equation} 
are surjective, the behavior of the pluricanonical maps of $Z$ should be `visible' from the pluricanonical maps of the pair $(Y, E)$.

The problem is that $Y$ is a $\p^1$-bundle over $Z$, so in fact $H^0\bigl(Y, \omega^m_Y(mE)\bigr)=0$ for $m>0$.  There are two ways to fix this.  Let $H_C$ be the hyperplane class on $C$ and $H_Y$ its pull-back to $Y$.

First, choose $d\gg 1$, and let $D_C\sim d H_C$ be a general divisor not passing through $v$. It gives $D_Y\subset Y$, with $D_Y$ disjoint from $E$, and the pair $(Y, D_Y+E)$ is of general type. We work out below that the maps
\begin{equation}\label{mck.t.say-2}
  H^0\bigl(Y, \omega^m_Y(mD_Y+mE)\bigr)\To H^0(E, \omega^m_E)\cong H^0(Z, \omega^m_Z)
\end{equation}
are surjective.

Second, fix $r>0$ not divisible by the characteristic, fix $d\gg 1$, and let $s\in \o_{C}(drH_C)$ be a general section. As in \cite[Section~2.4]{km-book}, these data determine degree $r$, ramified, cyclic covers
  $$
  \pi_C:C_r:=C\bigl[\sqrt[r]{s}\bigr]\To C \qtq{and}
  \pi_Y:Y_r:=Y\bigl[\sqrt[r]{s}\bigr]\To Y,
  $$
  whose canonical classes are $\pi_C^*\bigl(K_C+d(r-1)H_C\bigr)$ (resp.\ $\pi_Y^*\bigl(K_Y+d(r-1)H_Y\bigr)$).

  Here $E_r:=\pi_Y^{-1}(E)$ is $r$ disjoint copies of $E$.  We check that the maps
 \begin{equation}\label{mck.t.say-3}
  H^0\bigl(Y_r, \omega^m_{Y_r}(mE_r)\bigr)\To H^0(E_r, \omega^m_{E_r})\cong \oplus_1^rH^0(Z, \omega^m_Z)
\end{equation}
  are surjective.
\end{say}
\medskip

Next we go through the details of this for log canonical pairs $(Z, \Delta_Z)$ such that $K_Z+\Delta_Z\simq 0$.  For these, the cone is also log canonical, and the vertex is a log canonical center.

\begin{say}[\textbf{Cone construction}]\label{cone.contr.say}
  Start with a projective pair $(Z, \Delta_Z)$ such that $K_Z+\Delta_Z\simq 0$, and a semiample $\q$-divisor $D_Z$. Choose any ample $L$, and consider $Y:=\proj_Z(\o_Z+L)$. It is a $\p^1$-bundle $\tau\colon Y\to Z$ with two sections: $Z_0\subset Y$ with normal bundle $L^{-1}$, and $Z_\infty\subset Y$ with normal bundle $L$. With $\Delta_Y:=\tau^*\Delta_Z$, we have
$$
K_Y+\Delta_Y+Z_0+Z_\infty\simq 0.
$$
Thus $K_Y+ \Delta_Y+Z_0+2Z_\infty+D_Y$ is semiample and big.  We get its canonical model $\phi_Y\colon Y\to Y^{\rm c}$. Here $\phi_Y$ is an isomorphism on $Y\setminus Z_0$, and it restricts to the morphism given by multiples of $D_Z$ on $Z_0\cong Z$.

If $(Z, \Delta_Z+D_Z)$ is dlt, then so is $(Y, \Delta_Y+Z_0+Z_\infty+D_Y)$ by \cite[Proposition~2.15]{kk-singbook}.  Since $|Z_\infty|$ is ample on $Y\setminus Z_0$, we can replace $2Z_\infty$ with a suitable $\q$-divisor $Z'_\infty\simq 2Z_\infty$ to get a dlt pair
\begin{equation}\label{cone.contr.say-1}
(Y,\Delta_Y+Z_0+Z'_\infty+D_Y)
\end{equation}
with these properties.

If $L$ is very ample, then we can let $Z'_\infty$ be the sum of two general members of $|Z_\infty|$; otherwise, we may need $Z'_\infty$ to be a $\q$-divisor.  This may be preferable anyhow since this way we can arrange that $(Y,\Delta_Y+Z_0+Z'_\infty+D_Y)$ is terminal away from $Z_0$.  Note, however, that $Z_0$ must stay with coefficient 1, so dlt is the best that we can have for $(Y,\Delta_Y+Z_0+Z'_\infty+D_Y)$.  This is in accordance with \cite[Corollary~1.3]{2020arXiv200603571A}.
\end{say}

\begin{say}[\textbf{Computing the plurigenera}]\label{coh.comp.on.Y}
Let $(Y,\Delta_Y+Z_0+Z'_\infty+D_Y)$ be as above.  Taking into account that $K_Y+\Delta_Y+Z_0+Z_\infty\simq 0$, for all sufficiently divisible $m$, we have
\begin{equation}\label{coh.comp.on.Y-1}
\begin{split}
 H^0\bigl(Y, \o_Y(mK_Y+m\Delta_Y+mZ_0+mZ'_{\infty}+mD_Y)\bigr)&=
H^0\bigl(Y, \o_Y(mZ_{\infty}+mD_Y)\bigr)\\
&=
H^0\bigl(Z, \o_Z(mD_Z)\otimes S^m(\o_Z+L)\bigr)\\
&= \tsum_{r=0}^m H^0\bigl(Z, L^r(mD_Z)\bigr).
\end{split}
\end{equation}
Since $D_Z$ is semiample, by Fujita's vanishing theorem \cite{MR726440} we can choose $L$ sufficiently ample such that
$$
h^0\bigl(Z, L^r(mD_Z)\bigr)=\chi\bigl(Z, L^r(mD_Z)\bigr)
$$
for $r\geq 1$ and for all sufficiently divisible $m$. These terms are thus deformation invariant.  (In our case, this actually works for any $L$ since Kodaira vanishing holds on the $S_t$; \textit{cf.} \cite[Theorem~3]{MR3079312}.)

The key term is the $r=0$ summand $H^0\bigl(Z, \o_Z(mD_Z)\bigr)$ of \eqref{coh.comp.on.Y-1}.  Thus we see that if we vary the pair $(Z, \Delta_Z+D_Z)$, then any jump in the plurigenera of $(Z, \Delta_Z+D_Z)$ leads to the same jump in the plurigenera of $(Y,\Delta_Y+Z_0+Z'_\infty+D_Y)$, for all sufficiently divisible $m$.
\end{say}

\begin{say}[\textbf{Construction of Example~\ref{pg.jump.3d.exmp}}]
Start with $\bigl(S, \Delta+\tfrac12D\bigr)\to \a^1$ as in \eqref{pg.jump.2d.exmp.c-1}. Let $L$ be any relatively ample line bundle on $S$, and set $Y:=\proj_S(\o_S+L)$.  We get $(Y,\Delta_Y+Z_0+Z'_\infty+D_Y)\to \a^1$ as in \eqref{cone.contr.say-1}.

Claims \eqref{pg.jump.3d.exmp-3}--\eqref{pg.jump.3d.exmp-4} follow from Corollary~\ref{mult.1.bch.cor}, and the jump of the plurigenera \eqref{pg.jump.3d.exmp-5} is computed in Section~\ref{coh.comp.on.Y}. 
\end{say}

In characteristic 0 one usually proves claims \eqref{pg.jump.3d.exmp-3}--\eqref{pg.jump.3d.exmp-4} using inversion of adjunction as in \cite[Theorem~4.9]{kk-singbook}.
This is not known in positive characteristic; see however \cite{ha-xu-pos}
for related results
for 3-folds.  The following is a combination of \cite[Exercise~6.18]{ksc}
and \cite[Proposition~2.7]{kk-singbook}.

   \begin{lem}\label{mult.1.lc.lem}
     Let $X$ be a regular scheme and $\sum_{i\in I} D_i$ a simple normal crossing divisor. For $J\subset I$, set $D_J:=\cap_{i\in J}D_i$.  Let $\Delta$ be an effective $\r$-divisor. Assume that
 \begin{enumerate}
   \item\label{mult.1.lc.lem-1} for every $J$, none of the irreducible components of\, $D_J$ is contained in $\supp \Delta$, and $\mult_x(\Delta|_{D_J})\leq 1$ $($resp.\ $<1)$ for every $x\in D_J$.
     \end{enumerate}
Then $(X, D+\Delta)$ is log canonical $($resp.\ dlt\,$)$ near $D$.
   \end{lem}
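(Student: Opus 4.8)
The plan is to reduce the statement to a local discrepancy estimate and then to bound the contribution of $\Delta$ by its multiplicity along the strata of $D:=\sum_{i\in I}D_i$. Since being log canonical (resp.\ dlt) is local on $X$ and may be tested after henselization or completion, I would fix a point $x\in D$, set $J:=\{i:x\in D_i\}$, and choose a regular system of parameters $x_1,\dots,x_n$ at $x$ with $D_i=\{x_i=0\}$ for $i\in J$; then $D_J$ is the smallest stratum through $x$ and $x$ lies in its open part. Writing $b_i:=\ord_v D_i$, it suffices to show that every divisorial valuation $v$ centered at $x$ satisfies
\[
a(v;X,D+\Delta)+1 \;=\; A_v - \tsum_{i\in J} b_i - v(\Delta)\;\ge\;0\qquad(\text{resp. }>0),
\]
where $A_v=a(v;X,0)+1$ is the log discrepancy of $v$ on the regular scheme $X$ and $v(\Delta)=\sum_k a_k\,v(f_k)$ for $\Delta=\sum_k a_k f_k$. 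The char.-free fact that discrepancies of all divisorial valuations detect log canonicity, together with the combinatorial description of the SNC pair $(X,D)$, is the role played by \cite[Proposition~2.7]{kk-singbook}.

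The first input, essentially \cite[Exercise~6.18]{ksc}, is the multiplicity bound: for any effective divisor $\Delta$ on a regular scheme and any divisorial $v$ centered at $x$ one has $v(\Delta)\le \mult_x(\Delta)\cdot A_v$. I would prove this by induction on the number of point blow-ups realizing $v$. Letting $\sigma\colon X'\to X$ be the blow-up of $x$ with exceptional $E$ (so $A_{\ord_E}=n$ and $\ord_E\Delta=\mult_x\Delta$), either $v=\ord_E$, where the bound is immediate, or $v$ is centered at a point $x'\in E$, where one writes $\sigma^*\Delta=(\mult_x\Delta)\,E+\Delta'$, uses $\mult_{x'}\Delta'\le\mult_x\Delta$ and $A_v=A_{X',v}+(n-1)\,v(E)$, and closes the induction using $n\ge 2$ (the case $n=1$ being trivial).

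The heart of the matter is to upgrade this to a \emph{stratified} bound incorporating $D$: for $x\in D_J^\circ$ and $v$ centered at $x$,
\[
v(\Delta)\;\le\;\mult_x\!\bigl(\Delta|_{D_J}\bigr)\cdot\Bigl(A_v-\tsum_{i\in J} b_i\Bigr).
\]
Here $A_v-\sum_{i\in J}b_i$ is the log discrepancy of $v$ for the pair $(X,D)$, which is $\ge 0$ because $(X,D)$ is formally toric, and which vanishes exactly on the toroidal valuations supported on the strata. For a monomial valuation $v=v_{\mathbf w}$ this is an elementary Newton-polytope computation: the lowest-degree monomial of $f_k|_{D_J}$ already occurs in $f_k$ and involves only the variables $x_j$ with $j\notin J$, so $v_{\mathbf w}(f_k)\le\bigl(\max_{j\notin J}w_j\bigr)\cdot\ord_x(f_k|_{D_J})\le\bigl(\sum_{j\notin J}w_j\bigr)\cdot\ord_x(f_k|_{D_J})$, and $\sum_{j\notin J}w_j=A_{v_{\mathbf w}}-\sum_{i\in J}w_i$. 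For general $v$ I would run the same induction along the \emph{toroidal} tower, blowing up strata $D_{J'}$ rather than points: such blow-ups preserve the SNC condition and the relation $A_v-\ord_v D=(A_{X',v}-\ord_v D')+(\codim-1)\ord_v(\text{exc})$, while $\mult_x(\Delta|_{D_J})$ does not increase on the relevant strict transforms. This toroidal induction is the main obstacle: one must track the $\Delta$-multiplicity along strata and the $(X,D)$-log discrepancy simultaneously and verify that blowing up a stratum through the center strictly decreases the inductive quantity.

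Granting the stratified bound, the conclusion is immediate. Since $\mult_x(\Delta|_{D_J})\le 1$ by hypothesis \eqref{mult.1.lc.lem-1},
\[
a(v;X,D+\Delta)+1=\Bigl(A_v-\tsum_{i\in J}b_i\Bigr)-v(\Delta)\ge\Bigl(A_v-\tsum_{i\in J}b_i\Bigr)\bigl(1-\mult_x(\Delta|_{D_J})\bigr)\ge 0,
\]
so $(X,D+\Delta)$ is log canonical near $D$. In the strict case $\mult_x(\Delta|_{D_J})<1$ the inequality is strict whenever $A_v-\sum_{i\in J}b_i>0$; the only valuations with $A_v-\sum_{i\in J}b_i=0$ are the toroidal ones whose centers are the strata $D_J$, so $\Delta$ creates no new log canonical centers and the log canonical centers of $(X,D+\Delta)$ are precisely the strata of $D$. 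Hence the pair is dlt near $D$, as claimed.
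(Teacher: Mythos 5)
Your overall reduction is sound, but the proof is not complete: everything hinges on the ``stratified bound'' $v(\Delta)\le \mult_x\bigl(\Delta|_{D_J}\bigr)\bigl(A_v-\tsum_{i\in J}b_i\bigr)$, and you only establish it for valuations that are monomial in coordinates \emph{on $X$ itself}. A general divisorial valuation centered at $x\in D$ is not of this form --- it becomes monomial only in coordinates on some higher model --- and the ``toroidal tower'' you propose cannot reach it: blow-ups of strata of the snc divisor extract precisely the toroidal (monomial) valuations and nothing else. To extract a general $v$ you must blow up the center of $v$, which after one step is typically no longer a stratum; at that moment the exceptional divisor has to be inserted into the snc boundary, and you must control the multiplicities of $\pi^{-1}_*\Delta$ restricted to \emph{all} strata of the enlarged divisor, including the new strata lying on the exceptional divisor. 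This is exactly the step your sketch waves at (``$\mult_x(\Delta|_{D_J})$ does not increase on the relevant strict transforms'' is asserted, not proved), and you yourself flag the toroidal induction as ``the main obstacle.'' It is genuinely delicate: already for $D=V(x)$, $\Delta=c\,V(x^2+y^5)$ in $\a^2$ (where $\mult_0(\Delta|_D)=5c$), the strict transform of $\Delta$ passes through the new zero-dimensional stratum $D_0\cap\pi^{-1}_*D$, so the tangential behavior of $\Delta$ along $D$ interacts nontrivially with the bookkeeping, and a correct induction must track this.

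For comparison, the paper proves no quantitative valuation inequality at all. It fixes one exceptional divisor $E$, localizes at the generic point of $\cent_XE$ so that the center is a closed point $x$, blows up $x$, and runs a purely qualitative induction, with termination supplied by \cite[Lemma~2.45]{km-book}: it suffices to check that $a(D_0,X,D+\Delta)\geq -1$ (resp.\ $>-1$) and that the pair $\bigl(B_xX,(D_0+\pi^{-1}_*D)+\pi^{-1}_*\Delta\bigr)$ again satisfies hypothesis \eqref{mult.1.lc.lem-1}. In other words, the entire content of the paper's proof is concentrated in the inheritance of \eqref{mult.1.lc.lem-1} under a point blow-up with the exceptional divisor adjoined --- precisely the point at which your plan stalls. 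Your unstratified bound $v(\Delta)\le \mult_x(\Delta)\,A_v$ is proved along the same lines as the paper's induction and is fine, and your concluding dlt deduction (no log canonical places of $(X,D)$ other than the toroidal ones over strata) is acceptable modulo the bound; but as it stands the argument has a hole at its center. To repair it, either prove the stratified bound by an induction on blow-ups of (localized) centers that tracks the restricted multiplicities along every stratum of the enlarged snc divisor, or abandon the quantitative formulation and argue qualitatively as the paper does.
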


   \begin{proof}
     Let $E$ be an exceptional divisor over $X$. In order to show that $a(E, X, D+\Delta)\geq -1$, we may localize at the generic point of $\cent_XE\subset X$.  So assume that $ \cent_XE=\{x\}$ is a closed point.  If $x\notin \supp \Delta$, then we are done by \cite[Proposition~2.7]{kk-singbook}.  Otherwise, let $\pi\colon B_xX\to X$ be the blow-up with exceptional divisor $D_0$.  Using induction and \cite[Lemma~2.45]{km-book},
it is enough to prove the following two claims:
   \begin{enumerate}\setcounter{enumi}{1}
   \item $a(D_0, X, D+\Delta)\geq -1$  (resp.\ $>-1$), and
   \item $\bigl(B_xX, (D_0+\pi^{-1}_*D)+\pi^{-1}_*\Delta\bigr)$ also satisfies
     \eqref{mult.1.lc.lem-1}.
   \end{enumerate}
   These are both straightforward.
   \end{proof}
   
   \begin{cor} \label{mult.1.bch.cor}
Let $g\colon  (X, D+\Delta)\to C$ be a smooth morphism.  Assume that $\bigl(X, (X_c+D)+\Delta\bigr)$ satisfies \eqref{mult.1.lc.lem-1} for every $c\in C$.

Then $\bigl(X, (X_c+D)+\Delta\bigr)$ is log canonical $($resp.\ dlt\,$)$ for every $c\in C$, and the same holds after every base change $C'\to C$.
     \end{cor}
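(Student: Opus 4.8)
The plan is to reduce Corollary~\ref{mult.1.bch.cor} to Lemma~\ref{mult.1.lc.lem} by verifying that the simple-normal-crossing and multiplicity hypotheses are stable under fibers and under base change. First I would observe that since $g\colon X\to C$ is smooth and $C$ is a regular curve, the fiber $X_c$ is a regular divisor on the regular scheme $X$ for every $c\in C$, and more is true: the total space $X$ together with the divisor $X_c+D$ forms a simple normal crossing configuration, because smoothness of $g$ guarantees that $D$ restricts to an snc divisor on each fiber and that $X_c$ meets the components of $D$ transversally. Thus the hypothesis \eqref{mult.1.lc.lem-1} that we assumed for $(X,(X_c+D)+\Delta)$ is exactly the input required by Lemma~\ref{mult.1.lc.lem} applied with the boundary $D+\Delta$ replaced by $(X_c+D)+\Delta$. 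Applying that lemma directly gives that $\bigl(X,(X_c+D)+\Delta\bigr)$ is log canonical (resp.\ dlt) near $X_c+D$, and since away from this divisor the pair is just $(X,\Delta)$ with $X$ regular and the multiplicity condition controlling $\Delta$, one concludes the log canonical (resp.\ dlt) property everywhere.

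The second half is the base-change statement, and here the key point is that the conditions in \eqref{mult.1.lc.lem-1} are phrased entirely in terms of multiplicities of $\Delta$ restricted to the strata $D_J$, quantities that pull back correctly under a smooth base change. Given $C'\to C$, I would form the fiber product $X':=X\times_C C'$ with its projection $g'\colon X'\to C'$. Since $g$ is smooth, $g'$ is again smooth, so $X'$ is regular and the pulled-back divisor remains simple normal crossing; the strata $D'_J$ are the preimages of the $D_J$, and the fibers $(X')_{c'}$ are preimages of the fibers $X_{g(c')}$. The crucial observation is that for a smooth (hence flat, with reduced fibers) base change, restriction of the effective $\r$-divisor $\Delta':=\Delta\times_C C'$ to a stratum has the same pointwise multiplicities as $\Delta|_{D_J}$ at the corresponding points, because multiplicity is computed in the local ring and smoothness adds regular parameters without changing the leading term. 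Hence hypothesis \eqref{mult.1.lc.lem-1} holds for $\bigl(X',((X')_{c'}+D')+\Delta'\bigr)$, and a second application of Lemma~\ref{mult.1.lc.lem} finishes the proof.

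I expect the main obstacle to be the verification that the simple normal crossing structure of $D$ on $X$ remains simple normal crossing after adjoining the fiber $X_c$, and that this persists under base change. The subtlety is that adding the fiber $X_c$ as a new divisorial component could in principle violate the snc condition if $X_c$ were tangent to some stratum $D_J$ or if several fibers collided; but smoothness of $g$ precisely rules this out, since it forces $X_c$ to be a fiber of a smooth map and hence transverse to the relatively snc divisor $D$. Making this transversality rigorous — ideally by choosing, near any point, local coordinates in which $g$ is a coordinate projection and $D$ is a union of coordinate hyperplanes compatible with the fibration — is the one place where care is genuinely needed. Once the coordinate picture is in hand, both the fiberwise statement and its stability under the smooth base change $C'\to C$ become the two straightforward applications of Lemma~\ref{mult.1.lc.lem} described above.
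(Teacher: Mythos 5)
Your overall strategy is the right one, and it matches the paper's intent: the corollary carries no separate proof there precisely because it is meant to be an immediate application of Lemma~\ref{mult.1.lc.lem}, once one checks that hypothesis \eqref{mult.1.lc.lem-1} survives passage to the fiber product. Your first half (the fiberwise statement, the snc/transversality picture coming from smoothness of $g$) is fine. The genuine gap is in the base-change half: you assume the base change is smooth, writing that multiplicities are preserved ``for a smooth (hence flat, with reduced fibers) base change'' because ``smoothness adds regular parameters without changing the leading term.'' But the corollary asserts the conclusion after \emph{every} base change $C'\to C$, and the case that matters --- both for \eqref{pg.jump.3d.exmp-4} and for stable reduction, where one passes to ramified covers of the base curve --- is a ramified $C'\to C$. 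Then $g'\colon X'\to C'$ is still smooth (so $X'$ is regular and the fibers are reduced), but $\pi\colon X'\to X$ is \emph{not} smooth: it is ramified along the fibers over branch points, and $\pi^*$ does not preserve pointwise multiplicities along horizontal strata. Concretely, take $X=\spec k[x,t]\to C=\spec k[t]$, $\Delta=a\cdot(x^2-t)$, and the base change $t=s^2$. The curve $x^2=t$ is regular, so $\mult_0\Delta=a$; but $\pi^*\Delta=a\cdot(x^2-s^2)$ has multiplicity $2a$ at the origin (in characteristic 2 it is even the non-reduced divisor $2a\cdot(x+s)$). For $a\in(\tfrac12,1]$ the pair $\bigl(X',(X')_{0}+\pi^*\Delta\bigr)$ is actually non-lc at the origin, so the multiplicity-preservation claim is false as stated, and your verification of \eqref{mult.1.lc.lem-1} upstairs does not go through at points lying over branch points.

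What rescues the statement --- and is the reason the hypothesis adjoins $X_c$ to $D$ for \emph{every} $c\in C$ --- is that the fiber-stratum bounds dominate everything an arbitrary base change can do. Two observations suffice. First, restriction to the reduced fiber commutes with base change: locally $f(x,u(s))\big|_{s=0}=f(x,0)$, so $\Delta'|_{(X')_{c'}}$ is identified with $\Delta|_{X_c}$, and the conditions on fiber strata of $X'$ are literally those on fiber strata of $X$. Second, for a point $x'\in D'_J\cap (X')_{c'}$ on a horizontal stratum, restricting further to the fiber can only increase multiplicity, so $\mult_{x'}\bigl(\Delta'|_{D'_J}\bigr)\leq \mult_{x'}\bigl(\Delta'|_{D'_J\cap (X')_{c'}}\bigr)=\mult_x\bigl(\Delta|_{D_J\cap X_c}\bigr)\leq 1$ (resp.\ $<1$) by the hypothesis for the fiber through $x$. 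In the example above this is exactly the mechanism: the hypothesis forces $\mult_0(\Delta|_{X_0})=2a\leq 1$, i.e.\ $a\leq\tfrac12$, which compensates the doubling after $t=s^2$. Note also that your claim that no component of a vertical divisor lies in $\supp\Delta$, applied to the strata $D_J\cap X_c$, is what guarantees $\Delta|_{D_J}$ has no vertical components, so the coefficient (as opposed to pointwise) multiplicities cannot be scaled up by the ramification index either. Replacing your smoothness argument by these two observations, your two applications of Lemma~\ref{mult.1.lc.lem} do complete the proof.
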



The following also shows that the canonical models of the fibers do not form a flat family.

\begin{lem}\label{non.CM.Y1.lem}
The $(Y_i)^{\rm c}$ are not dlt, $(Y_0)^{\rm c}$ is CM, but $(Y_1)^{\rm c}\cong (Y^{\rm c})_t$ for $t\neq 0$ is not CM.
\end{lem}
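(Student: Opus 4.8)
The plan is to describe each $(Y_t)^{\rm c}$ as a divisorial contraction and to read the three assertions off the Iitaka fibration $f_t\colon S_t\to B_t$ of $D_t$, where $B_t\cong\p^1$. Recall that $Y_t=\proj_{S_t}(\o_{S_t}\oplus L)$ and that the canonical morphism $\phi_t\colon Y_t\to (Y_t)^{\rm c}$ is an isomorphism off the negative section $Z_0\cong S_t$ and contracts $Z_0$ along $f_t$; thus $\sing (Y_t)^{\rm c}=B_t$, the composite $f_t\circ\tau$ (with $\tau\colon Y_t\to S_t$) descends to a morphism $g_t\colon (Y_t)^{\rm c}\to B_t$, and over a general $b\in B_t$ the transverse singularity is a $2$-dimensional simple elliptic singularity. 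For the \emph{not dlt} assertion: $Z_0$ is contracted by the $(K+\Delta)$-trivial morphism $\phi_t$ and appears with coefficient $1$, so $a\bigl(Z_0,(Y_t)^{\rm c},\Delta^{\rm c}\bigr)=-1$; hence $Z_0$ is a log canonical place whose center is the whole curve $B_t$. Since $(Y_t)^{\rm c}$ is not regular at the generic point of $B_t$, this log canonical place sits over the non-snc locus, which by definition prevents $\bigl((Y_t)^{\rm c},\Delta^{\rm c}\bigr)$ from being dlt.

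For \emph{$(Y_0)^{\rm c}$ is CM}, I would argue by reduction to the fibres of $g_0$. Here $f_0$ is the projection $E\times\p^1\to\p^1$, so every scheme-theoretic fibre $f_0^{-1}(b)\cong E$ is smooth and reduced; the fibration is cohomologically flat, and the fibres of $g_0$ are the section-ring cones over $E$ with respect to $L|_E$, which are CM surfaces. As $(Y_0)^{\rm c}$ is normal, integral, and dominates the smooth curve $B_0$, the morphism $g_0$ is flat. Flatness over a regular base together with CM fibres then gives that $(Y_0)^{\rm c}$ is CM; away from $B_0$ the variety is smooth, so there is nothing to check there.

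For \emph{$(Y_1)^{\rm c}$ is not CM} the reduction to fibres is no longer safe, and I would instead detect the defect through $R^1\phi_{1*}\o_{Y_1}$. The fibre of $f_1$ over the image $b_0$ of $D_1$ is $pD_1$; since $\o_{S_1}(D_1)|_{D_1}\cong\o_{D_1}$ is trivial while the multiplicity is $p$, this is a \emph{wild} fibre, and I will use that wild fibres force $R^1 f_{1*}\o_{S_1}$ to carry torsion supported at $b_0$ (this is the point that must be established, see below). Pushing forward through the $\p^1$-bundle $\tau$, where $R\tau_*\o_{Y_1}=\o_{S_1}$ and $f_1\circ\tau=g_1\circ\phi_1$, transports this torsion into $\mathcal R:=R^1\phi_{1*}\o_{Y_1}$, so $\mathcal R$ has a nonzero subsheaf $\mathcal R_{\rm tors}$ supported at $x_0:=\phi_1(D_1)$. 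I then feed this into the local-cohomology spectral sequence
$$
H^p_{x_0}\bigl((Y_1)^{\rm c},R^q\phi_{1*}\o_{Y_1}\bigr)\ \Rightarrow\ H^{p+q}_{D_1}(Y_1,\o_{Y_1}),
$$
noting that $R^q\phi_{1*}\o_{Y_1}=0$ for $q\ge2$ and that the abutment vanishes in degrees $\le1$ because $D_1$ has codimension $2$ in the smooth variety $Y_1$. Since $(Y_1)^{\rm c}$ is normal we have $H^0_{x_0}=H^1_{x_0}=0$ for the structure sheaf, and vanishing of the abutment in degree $1$ forces the differential $d_2\colon H^0_{x_0}\bigl((Y_1)^{\rm c},\mathcal R\bigr)=\mathcal R_{\rm tors}\to H^2_{x_0}\bigl((Y_1)^{\rm c},\o\bigr)$ to be injective. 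Hence $H^2_{x_0}\bigl((Y_1)^{\rm c},\o\bigr)\neq0$, so $\depth_{x_0}\o_{(Y_1)^{\rm c}}=2<3$ and $(Y_1)^{\rm c}$ is not CM.

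The hard part is the non-CM half, and specifically the claim that the wild fibre produces \emph{genuine} torsion that survives into $\mathcal R$. Concretely this means verifying that $f_1$ fails to be cohomologically flat at $b_0$ — i.e.\ that $h^1(\o_{pD_1})>1$ — by tracking the connecting maps in the filtration of $\o_{pD_1}$ whose graded pieces are the trivial bundles $\o_{S_1}(-kD_1)|_{D_1}$, and then checking that the resulting torsion in $R^1 f_{1*}\o_{S_1}$ is carried by the $\phi_1$-contribution $g_{1*}\mathcal R$ rather than absorbed into $R^1 g_{1*}\o_{(Y_1)^{\rm c}}$. This is exactly where the characteristic-$p$ hypothesis enters, through the degree-$p$ isogeny $\rho$ with $\rho^*=0$ on $H^1$ that made $D_1$ a multiple fibre with trivial normal bundle.
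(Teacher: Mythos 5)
Your architecture is sound and, in outline, runs parallel to the paper's: all three assertions are handled correctly in spirit (your discrepancy argument for non-dltness via the lc place $Z_0$ with center $B_t\subset\sing (Y_t)^{\rm c}$ is a valid variant of the paper's localization to a simple elliptic singularity; your flatness-plus-CM-fibres argument for $(Y_0)^{\rm c}$ replaces the paper's observation that $R^1(\tau_0)_*\o_{S_0}\cong\o_{\p^1}$; and your local-cohomology spectral sequence at $x_0$ is a legitimate local substitute for the paper's global criterion, Lemma~\ref{nonCM.and.R1.lem}). But there is a genuine gap: you defer exactly the step that constitutes the paper's proof, namely $h^1(\o_{pD_1})=2$, and the method you sketch for it cannot work as stated. ``Tracking the connecting maps in the filtration of $\o_{pD_1}$'' is insufficient, because all graded pieces $\o_{S_1}(-kD_1)|_{D_1}$ are copies of $\o_{D_1}$, and the identical filtration for $\o_{mD_1}$ with $m<p$ (or in characteristic $0$) yields $h^1=1$; no formal bookkeeping of connecting maps can detect the difference. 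One must inject the characteristic-$p$ input quantitatively, which the paper does via Claim~\ref{ell.ruled.say-2} ($(\pi_1)_*\o_{S_1}(pD_1)\cong\o_E\oplus F_p$, resting on the degree-$p$ isogeny $\rho$ that splits $F_2$) together with the key factorization trick: pushing $0\to\o_{S_1}\to\o_{S_1}(pD_1)\to\o_{pD_1}\to 0$ down to $E$, the map $\alpha\colon\o_E\to F_p\oplus\o_E$ factors through $\pi_*\o_{S_1}((p-1)D_1)=F_p$ (Claim~\ref{ell.ruled.say-1}), so it lands in the $F_p$ summand, whence $\pi_*\o_{pD_1}\cong F_{p-1}\oplus\o_E$ and $h^1(\o_{pD_1})=2$. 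You gesture at wild-fibre theory (trivial normal bundle of $D_1$ versus multiplicity $p$), which would be an acceptable citation route, but you neither invoke it nor carry out a computation.

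The second deferred step is also a real issue on your chosen route. Comparing $R^1(f_1\circ\tau)_*\o_{Y_1}=R^1f_{1*}\o_{S_1}$ with $R\phi_{1*}$ via Leray for $g_1\circ\phi_1$ leaves two graded pieces, $g_{1*}R^1\phi_{1*}\o_{Y_1}$ and $R^1g_{1*}\o_{(Y_1)^{\rm c}}$, and the latter has generic rank $1$ as well (the fibres of $g_1$ are cones over elliptic curves), so the torsion could a priori be absorbed there; ruling this out requires an argument you do not give. The paper sidesteps the bifurcation entirely by identifying $R^1\phi_{1*}\o_{Y_1}$ with $R^1f_{1*}\o_{S_1}$ directly (this is what is behind the sentence that the cohomologies of line bundles on the $Y_i$ are easy to compute: along the fibres of $\phi_1$ one has $\o_{Y_1}(-nZ_0)|_{Z_0}\cong L^n$, positive on fibres, so the formal-function contributions from the infinitesimal neighborhoods of $Z_0$ vanish), after which torsion-freeness would force $R^1f_{1*}\o_{S_1}$ to be a line bundle, giving $h^1(\o_{pD_1})=1$ and contradicting the computation above. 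If you supply that identification (or an equivalent) and the $h^1(\o_{pD_1})=2$ computation, your spectral-sequence endgame does close the proof; as written, both load-bearing steps are acknowledged but not carried out, precisely at the point where the characteristic-$p$ phenomenon lives.
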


\begin{proof}
We have $\tau_{Y_i}\colon Y_i\to (Y_i)^{\rm c}$, which restricts to the elliptic fibration $\tau_i\colon  S_i\to \p^1$. The localization of $(Y_i)^{\rm c}$ at the generic point of its singular set is thus a 2-dimensional, elliptic singularity. In particular, the $(Y_i)^{\rm c}$ are not dlt.

For the CM claims, we aim to use Lemma~\ref{nonCM.and.R1.lem}. As in Section~\ref{coh.comp.on.Y}, the cohomologies of line bundles on the~$Y_i$ are easy to compute.  In particular, we obtain that $R^1(\tau_{Y_i})_*\o_{Y_i}=R^1(\tau_{i})_*\o_{S_i}$.

Since $S_0\to \p^1$ is trivial, $ R^1(\tau_{0})_*\o_{S_0}\cong \o_{\p^1}$.  Next we show that $R^1(\tau_{1})_*\o_{S_1}$ has a 0-dimensional associated point.

With $D_1$ as in Section~\ref{ell.ruled.say}, $\tau_1$ is given by the pencil $|pD_1, C|$. The general member is a smooth, elliptic curve, so $R^1(\tau_{1})_*\o_{S_1}$ is a coherent sheaf of generic rank 1.  If it is torsion-free, then it is a line bundle. In particular, $H^1(pD_1, \o_{pD_1})=1$.  Next we compute that $H^1(pD_1, \o_{pD_1})=2$, giving a contradiction, so $R^1(\tau_{1})_*\o_{S_1}$ is not torsion-free.

Consider the exact sequence
$$
0\To \o_{S_1}\To \o_{S_1}(pD_1)\To \o_{pD_1}\To 0.
$$
Pushing forward to $E$ and using Claim~\ref{ell.ruled.say-2},  we get
$$
0\To \o_E\stackrel{\alpha}{\longrightarrow} F_p\oplus \o_E\To \pi_*\o_{pD_1}\To 0.
$$
Note that $\o_{S_1}\to \o_{S_1}(pD_1)$ factors through $\o_{S_1}((p-1)D_1)$, and $\pi_*\o_{S_1}((p-1)D_1)$ gives that $F_p$ summand.  Thus the composite of $\alpha$ with the second projection $\alpha_E\colon \o_E\to \o_E\oplus F_p\to \o_E $ is zero.  Therefore, $\pi_*\o_{pD_1}\cong F_{p-1}\oplus \o_E$, and so $H^1(pD_1, \o_{pD_1})=2$.
\end{proof}

\begin{lem}\label{nonCM.and.R1.lem}
Let $g\colon Y\to X$ be a birational morphism of normal, projective 3-folds.  Let $L$ be an ample line bundle on $X$.  Assume that $H^i(Y, g^*L^{-r})=0$ for $i<3$ and $r\gg 1$.  Then $X$ is CM if and only if $R^1g_*\o_Y$ does not have a 0-dimensional associated point.
\end{lem}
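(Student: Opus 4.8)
The plan is to relate the Cohen--Macaulay property of $X$ to the vanishing of local cohomology, and then to convert that into a statement about $R^1g_*\o_Y$ via the Leray spectral sequence for $g$. First I would recall that for a normal projective $3$-fold $X$, being CM is equivalent to the vanishing of the intermediate local cohomology sheaves, or dually (by Serre duality and the interpretation via $H^i(X, L^{-r})$ for $r \gg 1$) to the statement that $H^1(X, L^{-r}) = 0$ and $H^2(X, L^{-r}) = 0$ for $r \gg 1$. The standard criterion here is that $X$ is CM if and only if $H^i(X, L^{-r}) = 0$ for $0 < i < \dim X = 3$ and all $r \gg 1$; this is a Serre-duality reformulation of the condition that the $\o_X$-modules have no low-depth behavior, and it is exactly the kind of asymptotic vanishing that Fujita-type statements control.

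The main computation is then to compare $H^i(X, L^{-r})$ with $H^i(Y, g^*L^{-r})$ using the Leray spectral sequence $E_2^{a,b} = H^a(X, R^bg_*(g^*L^{-r})) \Rightarrow H^{a+b}(Y, g^*L^{-r})$. By the projection formula, $R^bg_*(g^*L^{-r}) = (R^bg_*\o_Y) \otimes L^{-r}$. Since $g$ is a birational morphism of normal $3$-folds, $g_*\o_Y = \o_X$ and $R^bg_*\o_Y = 0$ for $b \geq 2$ when the fibers have dimension at most $1$; the only potentially nonzero higher direct image is $R^1g_*\o_Y$, which is supported on the locus where $g$ has positive-dimensional fibers. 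So the spectral sequence degenerates into a short piece, and the hypothesis $H^i(Y, g^*L^{-r}) = 0$ for $i < 3$ and $r \gg 1$ lets me read off the low-degree terms on the base.

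Concretely, I would extract from the spectral sequence the relation that for $r \gg 1$ the groups $H^i(X, L^{-r})$ are governed by $H^{i-1}\bigl(X, (R^1g_*\o_Y)\otimes L^{-r}\bigr)$ together with the assumed vanishing on $Y$. The key point is that $R^1g_*\o_Y$ is a coherent sheaf supported on a proper closed subset, so by Serre vanishing $H^a\bigl(X, (R^1g_*\o_Y)\otimes L^{-r}\bigr)=0$ for $a > 0$ and $r \gg 1$, while the degree-zero term $H^0\bigl(X, (R^1g_*\o_Y)\otimes L^{-r}\bigr)$ is controlled by the dimension of the support of $R^1g_*\o_Y$. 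Twisting a coherent sheaf $\sF$ by $L^{-r}$ forces $H^0(X, \sF \otimes L^{-r}) = 0$ for $r \gg 1$ unless $\sF$ has a $0$-dimensional associated point: if $\sF$ has positive-dimensional support with no embedded points at closed points, then sections of $\sF \otimes L^{-r}$ die for large $r$, whereas a $0$-dimensional associated point contributes a summand on which $L^{-r}$ acts trivially and the cohomology persists. This is exactly the dichotomy in the statement.

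Assembling these, I would show that the obstruction to $H^2(X, L^{-r}) = 0$ (equivalently to $X$ being CM, given that $H^1(X, L^{-r})=0$ follows more easily) is precisely the nonvanishing of $H^0\bigl(X,(R^1g_*\o_Y)\otimes L^{-r}\bigr)$ for $r \gg 1$, which in turn happens if and only if $R^1g_*\o_Y$ has a $0$-dimensional associated point. The step I expect to be the main obstacle is the careful bookkeeping in the spectral sequence to isolate $H^2(X, L^{-r})$ and to confirm that the $H^1$-level contributions from $R^1g_*\o_Y$ and the differentials do not interfere with the clean equivalence; one must verify that no other term can cancel or create the relevant cohomology, so that the presence of a $0$-dimensional associated point is both necessary and sufficient. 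The auxiliary facts — $R^bg_*\o_Y = 0$ for $b \geq 2$, and the asymptotic behavior of $H^\bullet(X, \sF \otimes L^{-r})$ as a function of $\dim \supp \sF$ and its associated points — are standard and I would cite or state them as needed rather than reprove them.
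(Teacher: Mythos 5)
Your overall architecture coincides with the paper's: Leray spectral sequence for $g$, the criterion that $X$ is CM if and only if $H^i(X,L^{-r})=0$ for $i<3$ and $r\gg 1$ (this is \cite[Corollary~5.72]{km-book}, which the paper cites), and the dichotomy that $H^0\bigl(X,\sF\otimes L^{-r}\bigr)$ persists for all $r$ exactly when $\sF$ has a $0$-dimensional associated point. But one step as written is false: you assert ``by Serre vanishing $H^a\bigl(X,(R^1g_*\o_Y)\otimes L^{-r}\bigr)=0$ for $a>0$ and $r\gg 1$.'' Serre vanishing concerns \emph{positive} twists; for negative twists, the higher cohomology of a sheaf with positive-dimensional support typically grows without bound (if $C\subset\supp R^1g_*\o_Y$ is a curve on which the sheaf has generic rank $1$, then $h^1$ grows like $r\deg(L|_C)$). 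This failure is not hypothetical: in the very application the lemma serves, $R^1g_*\o_Y$ is supported on the $1$-dimensional singular locus of $(Y_1)^{\rm c}$ with generic rank $1$, so $E_2^{1,1}\neq 0$ and your claimed degeneration ``into a short piece'' breaks down precisely in the intended example. Relatedly, your vanishing $R^bg_*\o_Y=0$ for $b\geq 2$ requires fibers of dimension at most $1$, an assumption the lemma does not grant (a birational morphism of $3$-folds can have a $2$-dimensional fiber).

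Both defects---and the ``main obstacle'' you flag about differentials interfering---evaporate once you abandon degeneration and use only the five-term exact sequence of low-degree terms, which is exactly what the paper does:
$$
0\to H^1(X,L^{-r})\to H^1(Y,g^*L^{-r})\to H^0\bigl(X,L^{-r}\otimes R^1g_*\o_Y\bigr)\xrightarrow{d_2} H^2(X,L^{-r})\to H^2(Y,g^*L^{-r}).
$$
This sequence is exact for any first-quadrant spectral sequence, with no hypothesis on $E_2^{1,1}$, $E_2^{0,2}$, or $R^2g_*\o_Y$. The assumed vanishing on $Y$ kills the second and fifth groups for $r\gg 1$, yielding $H^1(X,L^{-r})=0$ and $H^2(X,L^{-r})\cong H^0\bigl(X,L^{-r}\otimes R^1g_*\o_Y\bigr)$; since $H^0(X,L^{-r})=0$ automatically for $X$ normal projective and $L$ ample, \cite[Corollary~5.72]{km-book} together with your associated-point dichotomy finishes the proof. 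So your endgame is correct, but the route through full degeneration of the spectral sequence would fail; the repair is to isolate $H^2(X,L^{-r})$ via the exact low-degree sequence alone.
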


\begin{proof}
The Leray spectral sequence computing $H^i(Y, g^*L^{-r})$ contains
$$
  H^1(X, L^{-r})\lhook\joinrel\To H^1(Y, g^*L^{-r}) \quad\text{and}\quad
  H^0(X, L^{-r}\otimes R^1g_*\o_Y)\To H^2(X, L^{-r})\To H^2(Y, g^*L^{-r}).
  $$
Thus $H^1(X, L^{-r})=0$, and $H^2(X, L^{-r})=0$ for $r\gg 1$ if and only if $R^1g_*\o_Y$ has no 0-dimensional associated points. The rest follows from \cite[Corollary~5.72]{km-book}.
\end{proof}
   
\subsection*{Acknowledgments}
I thank F.~Bernasconi, I.~Brivio, P.~Cascini, C.~Hacon, S.~Ko\-v\'acs, J.~M\textsuperscript{c}Kernan, Zs.~Patakfalvi and C.~Xu for suggestions and references, and B.~Totaro for correcting earlier mistakes in Section~\ref{ell.ruled.say}.


\newcommand{\etalchar}[1]{$^{#1}$}

\end{document}